\newtheorem{theorem}{Theorem}[section]
\newtheorem{lemma}{Lemma}[section]
\newtheorem{remark}{Remark}[section]
\begin{document}

\title{\large\bf On the cardinality of general $h$-fold sumsets
  \footnote{This work was supported by the National
Natural Science Foundation of China, Grant No. 11371195.}}
\date{}
\author{Quan-Hui Yang$^{1,2}$ and Yong-Gao Chen$^2$
\footnote{Emails:~yangquanhui01@163.com (Q.-H Yang), ygchen@njnu.edu.cn (Y.-G Chen).}\\
\small 1. School of Mathematics and Statistics, \\ \small Nanjing
University of Information Science and Technology,
\\ \small    Nanjing 210044, P. R. CHINA
\\ \small 2. School of Mathematical Sciences and Institute of
Mathematics,
\\ \small Nanjing Normal University,  Nanjing 210023, P. R. CHINA}
\maketitle
\vskip 3mm

\begin{abstract} Let $A=\{a_0,a_1,\ldots,a_{k-1}\}$ be a  set of $k$ integers. For any integer
$h\ge 1$ and any ordered $k$-tuple of positive integers
$\mathbf{r}=(r_0,r_1,\ldots,r_{k-1})$, we define a general
$h$-fold sumset, denoted by $h^{(\mathbf{r})}A$, which is the set
of all sums of $h$ elements of $A$, where $a_i$ appearing in the
sum can be repeated at most $r_i$ times for $i=0,1,\ldots,k-1$. In
this paper, we give the best lower bound for $|h^{(\mathbf{r})}A|$
in terms of $\mathbf{r}$ and $h$ and determine the structure of
the set $A$ when $|h^{(\mathbf{r})}A|$ is minimal. This
generalizes results of Nathanson, and recent results of Mistri and
Pandey and also solves a problem of Mistri and Pandey.

{\it 2010 Mathematics Subject Classification:} 11B13.

{\it Keywords and phrases:} $h$-fold sumsets, arithmetic progression, direct and inverse problems.

\end{abstract}

\section{Introduction}\label{b}
Let $\mathbb{N}$ denote the set of all nonnegative integers. For
any
 finite set of integers $A$ and any positive integer $h\ge
2$, define
$$hA=\{a_1+a_2+\cdots+a_h:a_i\in A (1\le i\le h)\}$$
and
$$h\text{\^{}}A=\{a_1+a_2+\cdots+a_h:a_i\in A (1\le i\le h), ~a_i\not=a_j~\text{for all }~i\not=j\}.$$
Sumsets are important in additive number theory (see \cite{Kapoor,lev1,lev2,Nathanson72,Nathanson89,Pan,chen,yang}).

Finding lower bounds for $|hA|$ and $|h\text{\^{}}A|$ in terms of
$h$ and $|A|$ and determining the structure of sets $A$ for which
$|hA|$ or $|h\text{\^{}}A|$ are minimal are important problems in
additive number theory.

 Nathanson \cite{Nathanson96} proved the following fundamental and important results.

\noindent{\bf  Theorem A.} (See \cite[Theorem 1.3]{Nathanson96})
{\em~ Let $h\ge 2$ be an integer and  $A$ a  finite set of
integers with $|A|=k$. Then
$$|hA|\ge hk-h+1.$$}

\noindent{\bf  Theorem B.} (See \cite[Theorem 1.6]{Nathanson96})
{\em~ Let $h\ge 2$ be an integer and  $A$ a  finite set of
integers with $|A|=k$. Then
$$|hA|= hk-h+1$$ if and only if $A$ is a $k$-term arithmetic progression.}

\noindent{\bf  Theorem C.} (See \cite[Theorem 1.9]{Nathanson96} or
\cite[Theorem 1]{Nathanson95}) {\em~ Let $A$ be a  finite set of
integers with $|A|=k$ and let $1\le h\le k$. Then
$$|h\text{\^{}}A|\ge hk-h^2+1.$$ This lower bound is best possible.}

\noindent{\bf  Theorem D.} (See \cite[Theorem 1.10]{Nathanson96} or \cite[Theorem 2]{Nathanson95})
{\em~ Let $k\ge 5$ and let $2\le h\le k-2$. If $A$ is a set of $k$ integers such that
$$|h\text{\^{}}A|= hk-h^2+1,$$ then $A$ is a $k$-term arithmetic progression.}

From now on, we assume that $A=\{ a_0, a_1, \dots , a_{k-1}\} $ is
a set of integers with $a_0<a_1<\cdots <a_{k-1}$. For two positive
integers $h$ and $r$, define
$$h^{(r)}A=\left\{\sum_{i=0}^{k-1}s_ia_i:0\le s_i\le r
~\text{for}~i=0,1,\ldots,k-1~\text{and}~\sum_{i=0}^{k-1}s_i=h\right\}.$$
Clearly, $h^{(1)}A=h\text{\^{}}A$ and $h^{(h)}A=hA$. Recently,
Mistri and Pandey generalized the above results.

\noindent{\bf  Theorem E.} (See \cite[Theorem 2.1]{Pandey}) {\em~
Let $A$ be a  set of $k$ integers, $r$ and $h$ be two integers
such that $1\le r\le h\le rk$.  Then
$$|h^{(r)}A|\ge mr(k-m)+(h-mr)(k-2m-1)+1,$$
where $m$ is the integer with $h/r-1<m\le h/r$. This lower bound
is best possible.}

\noindent{\bf  Theorem F.} (See \cite[Theorem 3.1, Theorem
3.2]{Pandey}) {\em~ Let $k\ge 3$, $r$ and $h$ be integers with
$1\le r\le h\le rk-2$ and $(k, h, r)\not= (4,2,1)$. If $A$ is a
set of $k$ integers such that
$$|h^{(r)}A|=mr(k-m)+(h-mr)(k-2m-1)+1,$$
where $m$ is the integer with $h/r-1<m\le h/r$, then $A$ is a
$k$-term arithmetic progression.}

For any ordered $k$-tuple of positive  integers
$\mathbf{r}=(r_0,r_1,\ldots,r_{k-1})$ and  any positive integer
$h$, define
$$h^{(\mathbf{r})}A=\left\{\sum_{i=0}^{k-1}s_ia_i:0\le s_i\le r_i
(0\le i\le k-1), ~\sum_{i=0}^{k-1}s_i=h\right\}.$$ Clearly, if
$\mathbf{r}=(r,r,\ldots,r)$ is an ordered $k$-tuple of positive
integers, then $h^{(\mathbf{r})}A=h^{(r)}A$.

Mistri and Pandey \cite[Concluding Remarks]{Pandey} said that it
is interesting to study the direct and inverse problems related to
sumset $h^{(\mathbf{r})}A$.

In this paper, we solve this problem.

For convenience, let $\sum\limits_{x=a}^{b}f(x)=0$ if $a>b$. Let
$I_{\mathbf{r}}(h)$ be the largest integer and $M_{\mathbf{r}}(h)$
be the least integer such that
\begin{eqnarray*}\sum_{j=0}^{I_{\mathbf{r}}(h)-1}r_j\le h,
\quad \sum_{j=M_{\mathbf{r}}(h)+1}^{k-1}r_j\le h,\end{eqnarray*}
and let
\begin{eqnarray*}\delta_{\mathbf{r}}(h)
=h-\sum_{j=0}^{I_{\mathbf{r}}(h)-1}r_j, \quad
\theta_{\mathbf{r}}(h)
=h-\sum_{j=M_{\mathbf{r}}(h)+1}^{k-1}r_j.\end{eqnarray*} Let
$$L(\mathbf{r}, h)=\sum_{j=M_{\mathbf{r}}(h)+1}^{k-1}j r_j
-\sum_{j=0}^{I_{\mathbf{r}}(h)-1}j r_j+M_{\mathbf{r}}(h)
\theta_{\mathbf{r}(h)}-I_{\mathbf{r}}(h)\delta_{\mathbf{r}(h)}+1.$$

In this paper, we prove the following theorems.
\begin{theorem}\label{thm1} Let $A=\{ a_0, a_1, \dots , a_{k-1}\} $ be
a set of integers with $a_0<a_1<\cdots <a_{k-1}$,
$\mathbf{r}=(r_0,r_1,\ldots,r_{k-1})$ be an ordered $k$-tuple of
positive integers and $h$ be an integer with $$2\le h\le
\sum_{j=0}^{k-1}r_j.$$ Then $$|h^{(\mathbf{r})}A|\ge L(\mathbf{r},
h).$$ This lower bound is best possible.
\end{theorem}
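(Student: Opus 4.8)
The plan is to exhibit an explicit chain of $L(\mathbf{r},h)$ feasible coefficient vectors whose $A$-weighted sums are strictly increasing, and then to check that the arithmetic progression $A=\{0,1,\dots,k-1\}$ makes the bound sharp. Throughout, call a vector $s=(s_0,\dots,s_{k-1})$ \emph{feasible} if $0\le s_i\le r_i$ for all $i$ and $\sum_i s_i=h$; each feasible $s$ contributes the element $\langle s,A\rangle:=\sum_i s_i a_i$ to $h^{(\mathbf r)}A$. I associate to each feasible $s$ its \emph{index weight} $w(s)=\sum_{i=0}^{k-1} i\,s_i$. Writing $I=I_{\mathbf r}(h)$, $\delta=\delta_{\mathbf r}(h)$, $M=M_{\mathbf r}(h)$, $\theta=\theta_{\mathbf r}(h)$, the two distinguished feasible vectors are the ``bottom'' vector $s^{\min}$ with $s_j=r_j$ for $j<I$, $s_I=\delta$, $s_j=0$ for $j>I$, and the ``top'' vector $s^{\max}$ with $s_j=r_j$ for $j>M$, $s_M=\theta$, $s_j=0$ for $j<M$. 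A direct computation gives $w(s^{\min})=\sum_{j<I} j r_j+I\delta$ and $w(s^{\max})=\sum_{j>M} j r_j+M\theta$, so that $w(s^{\max})-w(s^{\min})+1=L(\mathbf r,h)$.

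The combinatorial heart of the argument is a connectivity lemma: if $s$ is feasible and $s\ne s^{\max}$, then there is an index $j$ with $s_j\ge 1$ and $s_{j+1}<r_{j+1}$; replacing $s$ by the vector obtained by moving one unit from coordinate $j$ to coordinate $j+1$ yields a feasible vector $s'$ with $w(s')=w(s)+1$ and $\langle s',A\rangle-\langle s,A\rangle=a_{j+1}-a_j>0$. I would prove the lemma by contradiction: if no such $j$ exists, let $q$ be the largest index with $s_q<r_q$; then every coordinate above $q$ is saturated, and choosing the largest occupied index below $q$ (if any) produces an admissible upward move into the empty or unsaturated slot directly above it, a contradiction. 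Hence the only feasible vector admitting no upward move is $s^{\max}$ itself. This lemma (and its mirror image for downward moves, which identifies $s^{\min}$) is the step I expect to require the most care, since one must track the interplay between the positivity of the $r_j$, the saturation pattern, and the two boundary indices $I$ and $M$.

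Granting the lemma, I would start from $s^{\min}$ and apply upward moves repeatedly. Each move increases $w$ by exactly $1$ and keeps the vector feasible, and by the lemma the process can stop only at $s^{\max}$; since $w$ increases by one at every step, the procedure visits one feasible vector at each integer value of $w$ between $w(s^{\min})$ and $w(s^{\max})$. This produces feasible vectors $v_0=s^{\min},v_1,\dots,v_{L-1}=s^{\max}$ with $v_{t}$ obtained from $v_{t-1}$ by a single upward move. Because every upward move strictly increases the $A$-weighted sum (as $a_{j+1}>a_j$), we obtain
$$\langle v_0,A\rangle<\langle v_1,A\rangle<\cdots<\langle v_{L-1},A\rangle,$$
so $h^{(\mathbf r)}A$ contains at least $L=L(\mathbf r,h)$ distinct elements, which is the asserted lower bound.

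Finally, for sharpness I would specialize to $A=\{0,1,\dots,k-1\}$, for which $\langle s,A\rangle=w(s)$. Here the same chain shows that every integer in $[w(s^{\min}),w(s^{\max})]$ is attained, while no feasible vector can have index weight outside this interval (by the two halves of the lemma, $s^{\min}$ and $s^{\max}$ realize the extreme weights). Thus $|h^{(\mathbf r)}A|$ equals exactly $w(s^{\max})-w(s^{\min})+1=L(\mathbf r,h)$ in this case, proving the bound best possible.
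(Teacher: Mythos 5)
Your proof is correct and follows essentially the same route as the paper: both build a chain of feasible vectors from the bottom vector $V$ to the top vector $V'$ by single-unit shifts $j\to j+1$, each raising the index weight $S$ by exactly $1$ and the $A$-sum strictly, so the chain has length $S(V')-S(V)+1=L(\mathbf r,h)$, and both use $A=\{0,1,\dots,k-1\}$ for sharpness. The only (minor) difference is that the paper proves a more general path lemma (a path exists between any two feasible vectors ordered by tail-sum dominance, which it reuses later for the inverse theorem), whereas you prove just the special case that every feasible vector other than $s^{\max}$ admits an upward move; your saturation argument for that lemma is sound.
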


\begin{theorem}\label{thm2} Let $k\ge 5$  be an integer, $\mathbf{r}=(r_0,r_1,\ldots,r_{k-1})$
be an ordered $k$-tuple of positive integers and let $h$ be an
integer with $$2\le h\le \sum_{j=0}^{k-1}r_j-2.$$ If $A$ is a set
of $k$ integers, then
\begin{eqnarray*}|h^{(\mathbf{r})}A|=L(\mathbf{r}, h)
\end{eqnarray*}
if and only if  $A$ is a $k$-term arithmetic progression.
\end{theorem}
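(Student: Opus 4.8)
The plan is to prove both directions, the forward one (equality implies arithmetic progression) being the substantial part. For an admissible configuration $s=(s_0,\dots,s_{k-1})$, meaning $0\le s_i\le r_i$ and $\sum_i s_i=h$, write $\mathrm{sum}(s)=\sum_i s_i a_i$ and introduce the potential $\sigma(s)=\sum_i i\,s_i$; also abbreviate $I=I_{\mathbf r}(h)$, $M=M_{\mathbf r}(h)$, $\delta=\delta_{\mathbf r}(h)$, $\theta=\theta_{\mathbf r}(h)$. First I would record the two extremal configurations: the left-packed one ($s_j=r_j$ for $j<I$, $s_I=\delta$, and $0$ afterwards), which is the unique minimizer of $\sigma$, with $\sigma=\sigma_{\min}:=\sum_{j<I}jr_j+I\delta$; and the right-packed one, the unique maximizer, with $\sigma=\sigma_{\max}:=\sum_{j>M}jr_j+M\theta$. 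Directly from the definitions, $\sigma_{\max}-\sigma_{\min}+1=L(\mathbf r,h)$. The converse direction is then immediate: if $A$ is an arithmetic progression with common difference $d$, then $\mathrm{sum}(s)=ha_0+d\,\sigma(s)$ depends only on $\sigma(s)$, and since every integer value of $\sigma$ in $[\sigma_{\min},\sigma_{\max}]$ is attained (move one unit one step to the right at a time), one gets exactly $|h^{(\mathbf r)}A|=L(\mathbf r,h)$.

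For the forward direction, the heart of the argument is the following claim, denoted $(\star)$: under the equality hypothesis, any two admissible configurations with the same value of $\sigma$ have the same sum. To prove $(\star)$ I would set $f(\ell)=\min\{\mathrm{sum}(s):\sigma(s)=\ell\}$ and $g(\ell)=\max\{\mathrm{sum}(s):\sigma(s)=\ell\}$ for $\sigma_{\min}\le\ell\le\sigma_{\max}$, and show both are strictly increasing. Indeed, a configuration attaining $f(\ell+1)$ is not the left-packed one (since $\ell+1>\sigma_{\min}$), so some unit can be moved one step to the left, producing an admissible configuration at level $\ell$ with strictly smaller sum; hence $f(\ell)<f(\ell+1)$. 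Symmetrically, moving a unit one step to the right in a configuration attaining $g(\ell)$ shows $g(\ell)<g(\ell+1)$. Thus $\{f(\ell)\}_\ell$ and $\{g(\ell)\}_\ell$ are two strictly increasing lists of $L(\mathbf r,h)$ elements, each contained in $h^{(\mathbf r)}A$; since $|h^{(\mathbf r)}A|=L(\mathbf r,h)$, both lists exhaust $h^{(\mathbf r)}A$ and therefore coincide, so $f(\ell)=g(\ell)$ for every $\ell$, which is exactly $(\star)$.

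With $(\star)$ in hand I would extract the arithmetic-progression structure by applying it to double swaps (here $e_t$ denotes the $t$-th coordinate vector). For $i<j$ and any admissible $s$ with $s_{i+1}\ge 1$, $s_j\ge 1$, $s_i<r_i$, $s_{j+1}<r_{j+1}$, the configuration $s'=s+e_i-e_{i+1}+e_{j+1}-e_j$ is again admissible and satisfies $\sigma(s')=\sigma(s)$, so $(\star)$ forces $\mathrm{sum}(s')=\mathrm{sum}(s)$, that is, $a_{i+1}-a_i=a_{j+1}-a_j$. Writing $d_t=a_{t+1}-a_t$, it then suffices to equate all the $d_t$: the cleanest route is to take the pair $(0,j)$ for each $j\in\{1,\dots,k-2\}$ and exhibit an admissible $s$ with $s_1\ge 1$, $s_j\ge 1$, $s_0<r_0$, $s_{j+1}<r_{j+1}$, which yields $d_j=d_0$ for every $j$ and hence that $A$ is a $k$-term arithmetic progression.

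I expect the main obstacle to be precisely this last existence step. The hypotheses $h\ge 2$ and $h\le\sum_j r_j-2$ guarantee, respectively, at least two units to distribute and at least two free slots so that positions $0$ and $j+1$ can be kept unsaturated, while $k\ge 5$ supplies the index room; this is exactly why the degenerate small cases (such as the $(4,2,1)$ excluded in Theorem F) do not arise. The delicate point is that when some coordinates $r_t$ equal $1$, the direct pair $(0,j)$ may fail for isolated $j$, and one must instead connect all difference-indices through a chain of realizable swaps. Verifying that the resulting graph on $\{0,\dots,k-2\}$ is connected for every admissible $\mathbf r$ and $h$ is an elementary but somewhat technical case analysis, and it is here that the full strength of the ranges $2\le h\le\sum_j r_j-2$ and $k\ge 5$ is needed.
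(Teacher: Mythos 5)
Your overall strategy is sound and runs closely parallel to the paper's: your claim $(\star)$ is a mild strengthening of the paper's path-comparison lemma (Lemma \ref{lem2}), your double swaps are exactly the relations the paper extracts by comparing two $(\mathbf{r},h)$-paths with the same endpoints, and your converse direction is the computation from the proof of Theorem \ref{thm1}. The proof of $(\star)$ via the two strictly increasing lists $f(\ell)$ and $g(\ell)$ is correct, granted the (true, and easily checked along the lines of Lemma \ref{lem1}) fact that any admissible configuration other than the left-packed one admits a one-unit leftward move, and dually for rightward moves.

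The genuine gap is the step you yourself flag as the ``main obstacle'': you never prove that all the differences $d_j=a_{j+1}-a_j$ coincide. Your proposed route --- the pair $(0,j)$ for every $j\in\{1,\dots,k-2\}$ --- breaks down already at $j=1$, where the two adjacent pairs overlap and the swap needs $s_1\ge 2$, hence $r_1\ge 2$; you acknowledge this but defer the repair to an unverified ``technical case analysis'' of a connectivity graph, so the argument is not closed. The repair is in fact short and needs no case analysis: restrict to \emph{disjoint} adjacent pairs $\{i,i+1\}$ and $\{j,j+1\}$ with $j\ge i+2$. For such a pair the required witness $s$ (with $s_{i+1}\ge 1$, $s_j\ge 1$, $s_i\le r_i-1$, $s_{j+1}\le r_{j+1}-1$) always exists by Lemma \ref{lem3}, because the lower bounds sum to $2\le h$ and the upper bounds sum to $\sum_t r_t-2\ge h$, regardless of which $r_t$ equal $1$. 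Taking $(i,j)=(j,j+2)$ for $0\le j\le k-4$ gives $d_j=d_{j+2}$, and taking $(i,j)=(0,3)$ --- which is available precisely because $k\ge 5$ --- gives $d_0=d_3$, linking the even-indexed and odd-indexed differences. This is exactly how the paper concludes; with this substitution your proof closes, and the degenerate behaviour you worry about is genuinely confined to $k\le 4$ (Theorems \ref{thm3} and \ref{thm4}).
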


\begin{remark}For Theorem \ref{thm2} with $1\le k\le 4$,
we shall give  complete results in Section \ref{secx}. Since
$$L( (r, r, \dots , r), h)=mr(k-m)+(h-mr)(k-2m-1)+1,$$ Theorem F is
a corollary of Theorem \ref{thm2} and Theorems \ref{thm3} and
\ref{thm4} in Section \ref{secx}.
\end{remark}

 \begin{remark} If $h=1$, then $h^{(\mathbf{r})}A = A$. So
$|h^{(\mathbf{r})}A|=k$.

If $$h=\sum_{j=0}^{k-1}r_j-1,$$ then $$h^{(\mathbf{r})}A =\left\{
\sum_{j=0}^{k-1}r_j a_j - a_i : 0\le i\le k-1 \right\} .$$ So
$|h^{(\mathbf{r})}A|=k$.

If $$h=\sum_{j=0}^{k-1}r_j,$$ then
$$h^{(\mathbf{r})}A =\left\{ \sum_{j=0}^{k-1}r_j a_j \right\} .$$ So $|h^{(\mathbf{r})}A|=1$.
\end{remark}

\section{Proofs}

 For any $k$-tuple
$X=(x_0,x_1,\ldots,x_{k-1})\in \mathbb{N}^k$, define the function
$$\phi_A(X)=\sum_{j=0}^{k-1}{x_ja_j}.$$

For any ordered $k$-tuple of positive integers
$\mathbf{r}=(r_0,r_1,\ldots,r_{k-1})$ and any positive integer
$h$, let $R(\mathbf{r},h)$ be the set of all ordered $k$-tuple
 $(x_0,x_1,\ldots,x_{k-1})$ of $\mathbb{N}^k$ such that
\begin{eqnarray*}
\sum_{j=0}^{k-1}x_j=h,\quad  0\le x_i\le r_i, \quad
i=0,1,\ldots,k-1.
\end{eqnarray*}
Then
 $$h^{(\mathbf{r})}A=\{\phi_A(X):X\in R(\mathbf{r},h)\}.$$
For any positive integer $k$ and any $k$-tuple
$X=(x_0,x_1,\ldots,x_{k-1})\in \mathbb{N}^k$, define
 the weighted
sum $$S(X)=\sum_{j=0}^{k-1} {j x_j}.$$ For two $k$-tuples
$U=(u_0,u_1,\ldots,u_{k-1}),W=(w_0,w_1,\ldots,w_{k-1})\in
\mathbb{N}^k$,
%we
%define their distance $$d(U,V):=|S(U)-S(V)|.$$
we call $U\rightarrow W$ a {\em step} if there exists an index
$j\ge 0$ such that $w_j=u_j-1$, $w_{j+1}=u_{j+1}+1$ and $w_i=u_i$
for all integers $i\not=j,j+1$. We call $X_1\rightarrow
X_2\rightarrow \cdots \rightarrow X_t$ a {\em $(\mathbf{r},
h)$-path} of length $t$, if $X_i\in R(\mathbf{r},h) (1\le i\le t)$
and $X_{i+1}\rightarrow X_{i} (1\le i\le t-1)$ are steps. It is
clear that if $X_1\rightarrow X_2\rightarrow \cdots \rightarrow
X_t$ is a $(\mathbf{r}, h)$-path of length $t$, then
$$S(X_{i+1})-S(X_{i})= 1 (1\le i\le t-1).$$
Thus $S(X_t)-S(X_1)=t-1$.

Let
$$V=(r_0,r_1,\ldots,r_{I_{\mathbf{r}}(h)-1},\delta_{\mathbf{r}(h)},0,\ldots,0)$$
and
$$V'=(0,\ldots,0,\theta_{\mathbf{r}(h)},r_{M_{\mathbf{r}}(h)+1},\ldots,r_{k-1}),$$
where
$I_{\mathbf{r}}(h),\delta_{\mathbf{r}(h)},\theta_{\mathbf{r}(h)},M_{\mathbf{r}}(h)$
are defined as in Section \ref{b}. Then $V, V'\in
R(\mathbf{r},h)$.

\begin{lemma}\label{lem0}We have $S(V')-S(V)+1=L(\mathbf{r}, h)$. In particular, any $(\mathbf{r}, h)$-path from $V$
to $V'$ has length $L(\mathbf{r}, h)$.\end{lemma}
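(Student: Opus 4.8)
The plan is to evaluate the weighted sums $S(V)$ and $S(V')$ directly from the definition $S(X)=\sum_{j=0}^{k-1}jx_j$ and to check that their difference plus one matches $L(\mathbf{r},h)$ term by term; the ``in particular'' clause will then follow at once from the additivity of $S$ along steps recorded just above the lemma.

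First I would abbreviate $I=I_{\mathbf{r}}(h)$, $M=M_{\mathbf{r}}(h)$, $\delta=\delta_{\mathbf{r}}(h)$ and $\theta=\theta_{\mathbf{r}}(h)$. By the definition of $V$, its $j$-th coordinate equals $r_j$ for $0\le j\le I-1$, equals $\delta$ for $j=I$, and equals $0$ for $j>I$, so that only these coordinates contribute and
\[
S(V)=\sum_{j=0}^{I-1}jr_j+I\delta .
\]
Dually, the $j$-th coordinate of $V'$ equals $0$ for $j<M$, equals $\theta$ for $j=M$, and equals $r_j$ for $M+1\le j\le k-1$, whence
\[
S(V')=M\theta+\sum_{j=M+1}^{k-1}jr_j .
\]
These coordinate descriptions stay valid in the degenerate cases $\delta=0$ or $\theta=0$, and $V,V'\in R(\mathbf{r},h)$ as already observed. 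Subtracting and adding $1$ then gives
\[
S(V')-S(V)+1=\sum_{j=M+1}^{k-1}jr_j-\sum_{j=0}^{I-1}jr_j+M\theta-I\delta+1,
\]
and the right-hand side is exactly $L(\mathbf{r},h)$, which would settle the first assertion.

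For the ``in particular'' clause I would invoke the relation already established for any $(\mathbf{r},h)$-path $X_1\to X_2\to\cdots\to X_t$, namely $S(X_{i+1})-S(X_i)=1$ and hence $S(X_t)-S(X_1)=t-1$. Since a path from $V$ to $V'$ is precisely one with $X_1=V$ and $X_t=V'$, this yields $t-1=S(V')-S(V)=L(\mathbf{r},h)-1$, so $t=L(\mathbf{r},h)$.

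I do not expect a genuine obstacle here: the argument is a direct computation. The only points needing care are the bookkeeping of which coordinate of $V$ (respectively $V'$) carries the partial value $\delta$ (respectively $\theta$) as opposed to $0$, and making sure the boundary indices $j=I$ and $j=M$ are placed correctly so that no term in the two weighted sums is dropped or counted twice. Once $S(V)$ and $S(V')$ are written out, the comparison with the definition of $L(\mathbf{r},h)$ is purely formal.
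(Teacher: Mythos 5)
Your proof is correct and follows essentially the same route as the paper: both compute $S(V)=\sum_{j=0}^{I-1}jr_j+I\delta$ and $S(V')=M\theta+\sum_{j=M+1}^{k-1}jr_j$ directly from the coordinate descriptions of $V$ and $V'$, match the difference plus one against the definition of $L(\mathbf{r},h)$, and then deduce the path-length claim from the fact that $S$ increases by exactly $1$ along each step. No gaps.
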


\begin{proof} Noting that
$$S(V)=\sum_{j=0}^{I_{\mathbf{r}}(h)-1}j r_j+I_{\mathbf{r}}(h)\delta_{\mathbf{r}(h)},\quad
S(V')=\sum_{j=M_{\mathbf{r}}(h)+1}^{k-1}j
r_j+M_{\mathbf{r}}(h)\theta_{\mathbf{r}(h)},$$ we have
\begin{eqnarray*}S(V')-S(V)&=&\sum_{j=M_{\mathbf{r}}(h)+1}^{k-1}j r_j
-\sum_{j=0}^{I_{\mathbf{r}}(h)-1}j
r_j+M_{\mathbf{r}}(h)\theta_{\mathbf{r}(h)}-I_{\mathbf{r}}(h)\delta_{\mathbf{r}(h)}\\
&=&L(\mathbf{r}, h)-1.\end{eqnarray*} Since a $(\mathbf{r},
h)$-path from $V$ to $V'$ has length $S(V')-S(V)+1$, it follows
that any $(\mathbf{r}, h)$-path from $V$ to $V'$ has length
$L(\mathbf{r}, h)$.
\end{proof}

\begin{lemma}\label{lem1} Let
$X=(x_0,x_1,\ldots,x_{k-1})\in R(\mathbf{r},h)$ and $
Y=(y_0,y_1,\ldots,y_{k-1})\in R(\mathbf{r},h)$ with $X\not=Y$. If
$$\sum_{j=i}^{k-1}x_j\le \sum_{j=i}^{k-1}y_j, \quad
i=1,2,\ldots,k-1,$$ then there exists a $(\mathbf{r}, h)$-path
from $X$ to $Y$.
\end{lemma}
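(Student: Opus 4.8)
The plan is to induct on the ``weighted-sum gap'' $S(Y)-S(X)$, constructing the path one step at a time by pushing a single unit of mass to the right, from $X$ toward $Y$. First I would record the elementary identity relating $S$ to the tail sums: writing $T_i(X)=\sum_{j=i}^{k-1}x_j$, one has $S(X)=\sum_{i=1}^{k-1}T_i(X)$, so the hypothesis $T_i(X)\le T_i(Y)$ immediately gives $S(X)\le S(Y)$. Moreover the data $h$ together with $T_1,\ldots,T_{k-1}$ determine a tuple in $R(\mathbf{r},h)$ uniquely (via $x_{k-1}=T_{k-1}$, $x_j=T_j-T_{j+1}$, $x_0=h-T_1$), so $X\neq Y$ forces $S(X)<S(Y)$. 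This both legitimizes the induction and shows the gap is a positive integer.

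For the inductive step I would produce a single step $X\to Z$ with $Z\in R(\mathbf{r},h)$ that does not run past $Y$, meaning $T_i(X)\le T_i(Z)\le T_i(Y)$ for every $i$. Let $p$ be the largest index with $x_p\neq y_p$; comparing $T_p$ shows $x_p\le y_p$, hence $x_p<y_p$ and $p\ge 1$. Since $\sum_{i=0}^{p}x_i=\sum_{i=0}^{p}y_i$ and $x_p<y_p$, we get $\sum_{i=0}^{p-1}x_i>\sum_{i=0}^{p-1}y_i\ge 0$, so there is a largest index $q\le p-1$ with $x_q\ge 1$. I claim the step moving one unit from coordinate $q$ to coordinate $q+1$ is the one we want: the source is nonempty by choice of $q$; the target has room because $x_{q+1}$ equals either $0$ (if $q+1<p$) or $x_p<y_p\le r_p$ (if $q+1=p$); and this move raises only $T_{q+1}$, by exactly $1$. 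It stays below $T_{q+1}(Y)$ because all coordinates strictly between $q$ and $p$ vanish, whence $T_{q+1}(X)=x_p+T_{p+1}(X)$ while $T_{q+1}(Y)\ge y_p+T_{p+1}(X)$, so $T_{q+1}(Y)-T_{q+1}(X)\ge y_p-x_p\ge 1$. Thus $Z$ lies in $R(\mathbf{r},h)$, remains tail-dominated by $Y$, and satisfies $S(Z)=S(X)+1$.

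The induction then closes: either $Z=Y$, so $X\to Y$ is a path of length $2$, or $Z\neq Y$ still satisfies the hypotheses of the lemma relative to $Y$ but with strictly smaller gap $S(Y)-S(Z)$, and prepending the step $X\to Z$ to the path from $Z$ to $Y$ supplied by the induction hypothesis yields the desired $(\mathbf{r},h)$-path. The one genuinely delicate point, and the step I expect to be the main obstacle, is guaranteeing a \emph{feasible} step that respects the capacities $r_i$ and overshoots $Y$ in no tail sum. A naive greedy choice that tries to fill the right-most deficient coordinate directly can fail when its left neighbor is empty; the remedy is precisely to transport the right-most available mass lying to the left of $p$, a choice that simultaneously secures nonemptiness of the source, room at the target, and the strict inequality $T_{q+1}(X)<T_{q+1}(Y)$ needed to preserve tail domination.
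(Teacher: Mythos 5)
Your proof is correct and is essentially the paper's argument: the crucial step -- locating the largest index $p$ with $x_p\neq y_p$, deducing $x_p<y_p$, and pushing the right-most unit of mass lying to the left of $p$ one place to the right -- is exactly the move the paper makes. The only difference is packaging: you run an induction on $S(Y)-S(X)$ (with a clean justification that the gap is positive), whereas the paper takes a maximal-length dominated path and derives a contradiction; these are interchangeable.
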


\begin{proof} Let $X_0=X\rightarrow X_1\rightarrow \cdots\rightarrow X_g$ be a
 $(\mathbf{r}, h)$-path of the maximal length such that
\begin{eqnarray}\label{condition}\sum_{j=t}^{k-1}x_{i,j}\le \sum_{j=t}^{k-1}y_j,
\quad ~1\le t\le k-1, 1\le i\le g,\end{eqnarray} where
$X_i=(x_{i,0},x_{i,1},\ldots,x_{i,k-1})~(0\le i\le g)$. Now we
prove that $X_g=Y$. Suppose that $X_g\not=Y$. Let $s$ be the
maximal index with $x_{g,s}\not=y_s$. Noting that $X, Y\in
R(\mathbf{r},h)$, we have
$$\sum_{j=0}^{k-1}x_{g,j}=h=\sum_{j=0}^{k-1}y_j.$$
Hence $s\ge 1$. Since
$$\sum_{j=s}^{k-1}x_{g,j}\le \sum_{j=s}^{k-1}y_j,$$ it follows
from the definition of $s$ that  $x_{g,s}<y_s$. If $x_{g,s-1}>0$,
let
$$X_{g+1}=(x_{g,0},\ldots,x_{g,s-1}-1,x_{g,s}+1,x_{g,s+1},\ldots,x_{g,k-1}),$$
then $X_g\rightarrow X_{g+1}$ is a $(\mathbf{r}, h)$-path and
$X_{g+1}$ also satisfies \eqref{condition}. This is a
contradiction with the maximality of $g$. Hence $x_{g,s-1}=0$. If
$x_{g,j}=0$ for all $0\le j\le s-1$, then
\begin{eqnarray*}\sum_{j=0}^{k-1}x_{g,j}&=&x_{g,s}+\sum_{j=s+1}^{k-1}x_{g,j}\\
&=&x_{g,s}+\sum_{j=s+1}^{k-1}y_j\\
&<&y_s+\sum_{j=s+1}^{k-1}y_j\\
&\le &\sum_{j=0}^{k-1}y_j=h,\end{eqnarray*} a contradiction with
$X_g\in R(\mathbf{r},h)$ (see the definition of $(\mathbf{r},
h)$-path). Thus there exists an index $j$ with $0\le j< s-1$ such
that $x_{g,j}>0$. We assume that $j$ is the largest such index.
Let
$$X_{g+1}=(x_{g,0},\ldots,x_{g,j}-1,x_{g,j+1}+1,0, \dots , 0, x_{g,s},\ldots,x_{g,k-1}).$$
Then $X_g\rightarrow X_{g+1}$ is a $(\mathbf{r}, h)$-path. Since
$X_{g}$  satisfies \eqref{condition}, it follows that $X_{g+1}$
also satisfies \eqref{condition}. This is a contradiction with the
maximality of $g$. Therefore, $X_g=Y$.
\end{proof}

\begin{lemma}\label{lem2}  Let $X_1\rightarrow
X_2\rightarrow \cdots\rightarrow X_{t-1}\rightarrow X_t$ and
$X_1\rightarrow X_2'\rightarrow \cdots\rightarrow
X_{t-1}'\rightarrow X_t$ be two different $(\mathbf{r}, h)$-paths
from $X_1$ to $X_t$. If $A$ is a set of $k$ integers such that
$|h^{(\mathbf{r})}A|=L(\mathbf{r}, h)$, then
$\phi_A(X_i)=\phi_A(X_i')$ for $i=2,3,\ldots,t-1$.
\end{lemma}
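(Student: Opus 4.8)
The plan is to embed both given paths into full $(\mathbf{r},h)$-paths from $V$ to $V'$ that share a common prefix and a common suffix, and then let the minimality $|h^{(\mathbf{r})}A|=L(\mathbf{r},h)$ force every intermediate value to be determined by its position alone.

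I would first record that $\phi_A$ is strictly increasing along every $(\mathbf{r},h)$-path: in a step $U\to W$ with distinguished index $j$ one has $\phi_A(W)-\phi_A(U)=a_{j+1}-a_j>0$ since $a_0<a_1<\cdots<a_{k-1}$. Consequently any $(\mathbf{r},h)$-path with $\ell$ vertices yields $\ell$ pairwise distinct elements of $h^{(\mathbf{r})}A$. Next I would verify that $V=(V_0,\dots,V_{k-1})$ is tail-minimal and $V'$ is tail-maximal in $R(\mathbf{r},h)$, i.e. $\sum_{j=i}^{k-1}V_j\le\sum_{j=i}^{k-1}x_j\le\sum_{j=i}^{k-1}V'_j$ for all $X=(x_0,\dots,x_{k-1})\in R(\mathbf{r},h)$ and all $i$. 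This is immediate from the fact that $\sum_{j=0}^{i-1}V_j=\min\bigl(h,\sum_{j=0}^{i-1}r_j\bigr)$ is the largest achievable head sum, together with the symmetric statement that $V'$ has the smallest head sums. Lemma \ref{lem1} then supplies a $(\mathbf{r},h)$-path $P_0$ from $V$ to $X_1$ and a $(\mathbf{r},h)$-path $P_2$ from $X_t$ to $V'$ (taken to be the trivial one-vertex path when $V=X_1$ or $X_t=V'$).

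Concatenating $P_0$, the first given path, and $P_2$ produces a $(\mathbf{r},h)$-path from $V$ to $V'$; replacing the middle block by the second given path produces another. Because the two given paths have the same number $t$ of vertices, the two resulting full paths have equal length, share the prefix $P_0$ (say with $s_0$ steps) and the suffix $P_2$, and by Lemma \ref{lem0} each has exactly $L(\mathbf{r},h)$ vertices. By the monotonicity above, each full path lists $L(\mathbf{r},h)$ distinct elements of $h^{(\mathbf{r})}A$; since $|h^{(\mathbf{r})}A|=L(\mathbf{r},h)$, these exhaust $h^{(\mathbf{r})}A$. Writing the elements of $h^{(\mathbf{r})}A$ in increasing order as $v_1<v_2<\cdots<v_L$ with $L=L(\mathbf{r},h)$, strict monotonicity forces the $m$-th vertex of any such full path to have $\phi_A$-value exactly $v_m$.

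Finally I would read off the positions. Since $P_0$ is shared, $X_1$ occupies position $s_0+1$ in both full paths, so $X_i$ occupies position $s_0+i$ in the first full path and $X_i'$ occupies position $s_0+i$ in the second. Hence $\phi_A(X_i)=v_{s_0+i}=\phi_A(X_i')$ for all $i$, and in particular for $i=2,3,\dots,t-1$, which is the assertion. The one delicate point is the positional bookkeeping---checking that the two full paths genuinely share prefix and suffix and have equal length, so that corresponding vertices land in equal positions; once that is in place, the conclusion is forced by the equality $|h^{(\mathbf{r})}A|=L(\mathbf{r},h)$ together with the strict increase of $\phi_A$ along paths.
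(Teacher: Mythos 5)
Your proposal is correct and follows essentially the same route as the paper: extend the given paths to full $(\mathbf{r},h)$-paths from $V$ to $V'$ via Lemma \ref{lem1}, invoke Lemma \ref{lem0} and the hypothesis $|h^{(\mathbf{r})}A|=L(\mathbf{r},h)$ to see that such a path exhausts $h^{(\mathbf{r})}A$, and use the strict increase of $\phi_A$ along paths to pin down the intermediate values. The only (harmless) difference is that you build two full paths and match positions, whereas the paper builds one full path and sandwiches the values $\phi_A(X_i')$ between $\phi_A(X_1)$ and $\phi_A(X_t)$; you are in fact slightly more careful in checking the tail-sum hypothesis of Lemma \ref{lem1} for $V$ and $V'$.
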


\begin{proof}  By Lemma \ref{lem1}, there exists a
$(\mathbf{r}, h)$-path from $V$ to $X_1$ and another $(\mathbf{r},
h)$-path from $X_t$ to $V'$. Thus we have the following
$(\mathbf{r}, h)$-path from $V$ to $V'$:
\begin{eqnarray}\label{path}V\rightarrow \cdots\rightarrow X_1\rightarrow X_2\rightarrow \cdots\rightarrow X_{t-1}
\rightarrow X_t\rightarrow \cdots \rightarrow V'.\end{eqnarray} By
Lemma \ref{lem0}, the length of
 the $(\mathbf{r}, h)$-path \eqref{path} is
$L(\mathbf{r}, h)=|h^{(\mathbf{r})}A|$. Clearly,
$$\phi_A(V)<\cdots<\phi_A(X_1)<\phi_A(X_2)<\cdots<\phi_A(X_{t-1})<\phi_A(X_{t})<\cdots<\phi_A(V').$$
Since
$$\{\phi_A(X):X ~\text{is on the $(\mathbf{r}, h)$-path \eqref{path}}\}\subseteq h^{(\mathbf{r})}A$$
and
$$|\{\phi_A(X):X ~\text{is on the $(\mathbf{r}, h)$-path \eqref{path}}\}|= |h^{(\mathbf{r})}A|,$$
it follows that $$h^{(\mathbf{r})}A=\{\phi_A(X):X ~\text{is on the
$(\mathbf{r}, h)$-path \eqref{path}}\}.$$ Noting that
$$\{\phi_A(X_2'),\phi_A(X_3'),\ldots,\phi_A(X_{t-1}')\}\subseteq h^{(\mathbf{r})}A$$
and
$$\phi_A(X_1)<\phi_A(X_2')<\cdots<\phi_A(X_{t-1}')<\phi_A(X_{t}),$$
we have $\phi_A(X_i)=\phi_A(X_i')$ for $i=2,3,\ldots,t-1$.
\end{proof}

\begin{lemma}\label{lem3} Let $c_i$ and $d_i (0\le i\le k-1)$ be integers with $c_i\le d_i(0\le i\le k-1)$.
If $h$ is an integer with $$\sum_{i=0}^{k-1} c_i \le h\le
\sum_{i=0}^{k-1} d_i,$$ then there exist integers $x_i (0\le i\le
k-1)$  with $c_i\le x_i\le  d_i(0\le i\le k-1)$ such that
$$h=x_0+x_1+\cdots +x_{k-1}.$$
\end{lemma}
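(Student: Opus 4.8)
The plan is to prove the statement by induction on $k$, peeling off one coordinate at a time and reducing to a sum over $k-1$ indices. An essentially equivalent route is a discrete intermediate value argument: starting from $(c_0,\dots,c_{k-1})$ and raising the coordinates one unit at a time up to $(d_0,\dots,d_{k-1})$, the total sum increases by exactly $1$ at each step, so it must pass through every integer value in $[\sum c_i,\sum d_i]$, in particular $h$. I will present the inductive version, which avoids formalizing such a path.

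For the base case $k=1$ the hypothesis reads $c_0\le h\le d_0$, so $x_0=h$ works. For the inductive step, assume the result for $k-1$ indices and write $S_c=\sum_{i=0}^{k-2}c_i$ and $S_d=\sum_{i=0}^{k-2}d_i$. The goal is to choose an integer $x_{k-1}$ with $c_{k-1}\le x_{k-1}\le d_{k-1}$ for which the reduced target $h-x_{k-1}$ lands in the admissible range $[S_c,S_d]$ for the remaining coordinates. Rearranging the latter condition, I need an integer $x_{k-1}$ with
\[
\max\bigl(c_{k-1},\,h-S_d\bigr)\le x_{k-1}\le \min\bigl(d_{k-1},\,h-S_c\bigr).
\]

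The main (and only slightly delicate) step is verifying that this integer interval is nonempty, that is, that its left endpoint does not exceed its right endpoint; since both endpoints are integers, nonemptiness immediately yields an admissible integer $x_{k-1}$. This reduces to four inequalities: $c_{k-1}\le d_{k-1}$ (hypothesis), $S_c\le S_d$ (from summing $c_i\le d_i$ over $0\le i\le k-2$), $c_{k-1}\le h-S_c$ (equivalently $S_c+c_{k-1}\le h$, which is the left global hypothesis), and $h-S_d\le d_{k-1}$ (equivalently $h\le S_d+d_{k-1}$, the right global hypothesis). Once such an $x_{k-1}$ is fixed, $h-x_{k-1}$ satisfies $S_c\le h-x_{k-1}\le S_d$, so the induction hypothesis supplies integers $x_0,\dots,x_{k-2}$ with $c_i\le x_i\le d_i$ summing to $h-x_{k-1}$, and appending $x_{k-1}$ completes the decomposition. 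I expect no further obstacle, since the entire content is the bookkeeping that these four inequalities follow from the hypotheses.
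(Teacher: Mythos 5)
Your proof is correct and complete. Note that the paper itself gives no argument for this lemma --- it states only ``Proof is left to the reader'' --- so there is nothing to compare against; your induction on $k$ is a clean and standard way to fill that gap. The four inequalities you isolate ($c_{k-1}\le d_{k-1}$, $S_c\le S_d$, $S_c+c_{k-1}\le h$, and $h\le S_d+d_{k-1}$) are exactly what is needed to make the integer interval $\bigl[\max(c_{k-1},h-S_d),\,\min(d_{k-1},h-S_c)\bigr]$ nonempty, and each follows directly from the hypotheses as you say. The alternative discrete intermediate-value argument you sketch in passing would also work and is arguably closer in spirit to the path/step machinery the paper uses elsewhere, but either route suffices.
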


Proof is left to the reader.

\begin{proof}[Proof of Theorem \ref{thm1}]
%Let
%$$\widetilde{R}=\{(x_0,x_1,\ldots,x_{k-1})\in \mathbb{N}^k:x_i\le r_i~\text{for}~i=0,1,\ldots,k-1\}.$$
 By Lemma
\ref{lem1}, there exists a $(\mathbf{r}, h)$-path
$V=V_0\rightarrow V_1\rightarrow \cdots \rightarrow V_{\ell}=V'$.
%Clearly, $d(V^{(i)},V^{(j)})=1$ for any integers $i,j$ with $1\le i,j\le \ell$.
By Lemma \ref{lem0}, we have $\ell +1=L(\mathbf{r}, h)$. Since
$\phi_A(V_i)\in h^{(\mathbf{r})}A (0\le i\le \ell )$ and
$\phi_A(V_{i+1})>\phi_A(V_i) (0\le i\le \ell -1 )$, we have
\begin{equation}\label{w}|h^{(\mathbf{r})}A|\ge \ell+1=L(\mathbf{r},
h).\end{equation}

Next we show that this lower bound is optimal. Let $A=\{0,1,\ldots,k-1\}$.
Then the smallest integer in $h^{(\mathbf{r})}A$ is
\begin{eqnarray*}&&\underbrace{0+\cdots+0}_{r_0~\text{copies}}+\underbrace{1+\cdots+1}_{r_1~\text{copies}}
+\cdots+\underbrace{(I_{\mathbf{r}}(h)-1)+\cdots+(I_{\mathbf{r}}(h)-1)}_{r_{I_{\mathbf{r}}(h)-1}~\text{copies}}\\
&&+\underbrace{I_{\mathbf{r}}(h)+\cdots+I_{\mathbf{r}}(h)}_{\delta_{\mathbf{r}(h)}~\text{copies}}\\
&=& S(V)
\end{eqnarray*}
and the largest integer in $h^{(\mathbf{r})}A$ is
\begin{eqnarray*}&&\underbrace{M_{\mathbf{r}}(h)+\cdots+M_{\mathbf{r}}(h)}_{\theta_{\mathbf{r}}(h)~\text{copies}}
+\underbrace{(M_{\mathbf{r}}(h)+1)+\cdots+(M_{\mathbf{r}}(h)+1)}_{r_{M_{\mathbf{r}}(h)+1}~\text{copies}}\\
&&+\cdots+\underbrace{(k-2)+\cdots+(k-2)}_{r_{k-2}~\text{copies}}
+\underbrace{(k-1)+\cdots+(k-1)}_{r_{k-1}~\text{copies}}\\
&=& S(V').
\end{eqnarray*}
It follows that
$$h^{(\mathbf{r})}A\subseteq [S(V),
S(V')].$$ Thus, by Lemma \ref{lem0}, we have
\begin{equation}\label{s}|h^{(\mathbf{r})}A|\le  S(V')-S(V)+1=L(\mathbf{r}, h).\end{equation} By
\eqref{w} and \eqref{s}, we have
$$|h^{(\mathbf{r})}A|=L(\mathbf{r}, h).$$
\end{proof}

\begin{proof}[Proof of Theorem \ref{thm2}]

Suppose that $|h^{(\mathbf{r})}A|=L(\mathbf{r}, h)$. For any
integer $j$ with $0\le j\le k-4$, by $$2\le h\le
\sum_{i=0}^{k-1}r_i-2$$ and Lemma \ref{lem3}, there exists
$$X=(x_0,
x_1, \dots, x_j, x_{j+1}, x_{j+2}, x_{j+3}, \dots , x_{k-1}) \in
R(\mathbf{r},h)$$ such that
$$1\le x_j\le r_j, \quad 0\le x_{j+1}\le r_{j+1}-1,\quad 1\le x_{j+2}\le
r_{j+2},\quad 0\le x_{j+3}\le r_{j+3}-1.$$ Then
\begin{eqnarray*}&&(\dots, x_j, x_{j+1}, x_{j+2}, x_{j+3}, \dots
)\\
&\rightarrow& (\dots, x_j-1, x_{j+1}+1, x_{j+2}, x_{j+3}, \dots
)\\
&\rightarrow& (\dots, x_j-1, x_{j+1}+1, x_{j+2}-1, x_{j+3}+1,
\dots ) \end{eqnarray*} and
\begin{eqnarray*}&&(\dots, x_j, x_{j+1}, x_{j+2}, x_{j+3}, \dots   )\\
&\rightarrow & (\dots, x_j, x_{j+1}, x_{j+2}-1, x_{j+3}+1, \dots
)\\
& \rightarrow& (\dots, x_j-1, x_{j+1}+1, x_{j+2}-1, x_{j+3}+1,
\dots )\end{eqnarray*} are two different $(\mathbf{r}, h)$-paths.
By Lemma \ref{lem2}, we have
$$\phi_A ((\dots, x_j-1, x_{j+1}+1, x_{j+2}, x_{j+3}, \dots
)) = \phi_A ((\dots, x_j, x_{j+1}, x_{j+2}-1, x_{j+3}+1, \dots
)).$$ This implies that $a_{j+1}-a_j=a_{j+3}-a_{j+2}$. Therefore,

$$a_{1}-a_0=a_{3}-a_{2}=a_{5}-a_{4}=\cdots , \quad a_{2}-a_{1}=a_{4}-a_{3}=a_6-a_5=\cdots.$$
In order to prove that $A$ is a $k$-term arithmetic progression,
it suffices to prove $a_4-a_3=a_1-a_0$.

By $$2\le h\le \sum_{i=0}^{k-1}r_i-2$$ and Lemma \ref{lem3}, there
exists
$$Y=(y_0,
y_1, y_2, y_3, y_4, \dots, y_{k-1}) \in R(\mathbf{r},h)$$ such
that
$$1\le y_0\le r_0, \quad 0\le y_1\le r_1-1,
\quad 1\le y_3\le r_3, \quad 0\le y_4\le r_4-1.$$ Then
\begin{eqnarray*}&& (y_0,
y_1, y_2, y_3, y_4, \dots, y_{k-1}) \\
&\rightarrow & (y_0-1,
y_1+1, y_2, y_3, y_4, \dots, y_{k-1}) \\
&\rightarrow &(y_0-1, y_1+1, y_2, y_3-1, y_4+1, \dots, y_{k-1})
\end{eqnarray*}
and
\begin{eqnarray*}&& (y_0,
y_1, y_2, y_3, y_4, \dots, y_{k-1}) \\
&\rightarrow & (y_0,
y_1, y_2, y_3-1, y_4+1, \dots, y_{k-1}) \\
&\rightarrow &(y_0-1, y_1+1, y_2, y_3-1, y_4+1, \dots, y_{k-1})
\end{eqnarray*}
are two different $(\mathbf{r}, h)$-paths. By Lemma \ref{lem2}, we
have
$$\phi_A ((y_0-1,
y_1+1, y_2, y_3, y_4, \dots, y_{k-1})) = \phi_A ((y_0, y_1, y_2,
y_3-1, y_4+1, \dots, y_{k-1})).$$ This implies that
$a_1-a_0=a_4-a_3$.

Therefore, $A$ is a $k$-term arithmetic progression.

Conversely, if $A$ is a $k$-term arithmetic progression, without
loss of generality, we may assume that $A=\{0,1,\dots , k-1\}$. By
the proof of Theorem \ref{thm1}, we have
$|h^{(\mathbf{r})}A|=L(\mathbf{r}, h)$.
\end{proof}

\section{Cases $1\le k\le 4$} \label{secx}

For $k=1$ and $1\le h\le r_0$, it is easy to see that
$h^{(\mathbf{r})}A = \{ h a_0 \}$. So $|h^{(\mathbf{r})}A|=1$.

For $k=2$ and $1\le h\le r_0+r_1$, we have
$$h^{(\mathbf{r})}A =\{ x_0 a_0 +x_1 a_1 : 0\le x_0\le r_0, 0\le
x_1\le r_1, x_0+x_1=h, x_0,x_1\in \mathbb{N}\} .$$ So
$$|h^{(\mathbf{r})}A|= |\{ (x_0, x_1): 0\le x_0\le r_0, 0\le
x_1\le r_1, x_0+x_1=h, x_0,x_1\in \mathbb{N}\} |.$$

Now we deal with the cases $k=3$ and $k=4$.

\begin{theorem}\label{thm3} Let $A=\{ a_0<a_1<a_2\}$ be a set of integers and
$\mathbf{r}= (r_0,r_1,r_2)$ be an ordered $3$-tuple of positive
integers. Suppose that $h$ is an integer with $2\le h\le
r_0+r_1+r_2 -2$. Then

(i) for $r_1=1$, we have $|h^{(\mathbf{r})}A|=L(\mathbf{r}, h)$;

(ii) for $r_1\ge 2$, we have $|h^{(\mathbf{r})}A|=L(\mathbf{r},
h)$ if and only if $A$ is a $3$-term arithmetic progression.
\end{theorem}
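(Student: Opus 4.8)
The plan is to run the path/diamond machinery of Lemmas \ref{lem0}--\ref{lem3}, adapted to the degenerate case $k=3$ where the four consecutive indices used in the proof of Theorem \ref{thm2} are unavailable. Throughout I would substitute $x_0=h-x_1-x_2$ and write $\phi_A(x_0,x_1,x_2)=ha_0+x_1 d_1+x_2 d_2$, where $d_1=a_1-a_0$ and $d_2=a_2-a_0$ satisfy $0<d_1<d_2$. The easy direction is common to both parts: if $A$ is a $3$-term arithmetic progression then, using the translation/dilation invariance of $|h^{(\mathbf{r})}A|$ to normalize $A=\{0,1,2\}$ and invoking the computation in the proof of Theorem \ref{thm1}, we get $|h^{(\mathbf{r})}A|=L(\mathbf{r},h)$. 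So it remains to show that minimality forces an arithmetic progression in case (ii), and that the minimum is always attained in case (i).

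For part (ii), assuming $|h^{(\mathbf{r})}A|=L(\mathbf{r},h)$ with $r_1\ge 2$, I would build a single diamond out of the two overlapping steps $0\rightarrow 1$ and $1\rightarrow 2$, which both pass through coordinate $1$; producing two genuinely different $(\mathbf{r},h)$-paths this way is exactly what the room $r_1\ge 2$ provides. By Lemma \ref{lem3}, applied with lower bounds $(1,1,0)$ and upper bounds $(r_0,r_1-1,r_2-1)$ whose sums are precisely $2$ and $\sum_i r_i-2$, there is $X=(x_0,x_1,x_2)\in R(\mathbf{r},h)$ with $x_0\ge 1$, $1\le x_1\le r_1-1$ and $x_2\le r_2-1$. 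Then
$$X\rightarrow(x_0-1,x_1+1,x_2)\rightarrow(x_0-1,x_1,x_2+1)$$
and
$$X\rightarrow(x_0,x_1-1,x_2+1)\rightarrow(x_0-1,x_1,x_2+1)$$
are two distinct $(\mathbf{r},h)$-paths with the same endpoints, so Lemma \ref{lem2} equates their middle terms; a one-line computation then yields $2a_1=a_0+a_2$, i.e.\ $A$ is an arithmetic progression.

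For part (i), with $r_1=1$ this diamond cannot be formed, and in fact the minimum is attained for every $A$, so I would argue by direct enumeration rather than the inverse machinery. Since $x_1\in\{0,1\}$, the set $h^{(\mathbf{r})}A-ha_0$ splits into the family $\{x_2 d_2\}$ (from $x_1=0$) and the family $\{d_1+x_2 d_2\}$ (from $x_1=1$), where in each case $x_2$ runs over an explicit integer interval $I_0$, $I_1$ determined only by $\mathbf{r}$ and $h$. Within each family the values are distinct, and the two families are disjoint because $x_2 d_2=d_1+x_2'd_2$ would force $d_1=(x_2-x_2')d_2$, which is impossible since $0<d_1<d_2$. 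Hence $|h^{(\mathbf{r})}A|=|I_0|+|I_1|$, a quantity independent of $A$; comparing with the arithmetic progression $A=\{0,1,2\}$, where Theorem \ref{thm1} gives value $L(\mathbf{r},h)$, forces $|h^{(\mathbf{r})}A|=L(\mathbf{r},h)$ for all $A$.

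The hard part will be conceptual rather than computational: recognizing that $k=3$ falls genuinely outside the scope of Theorem \ref{thm2} and identifying the correct substitute for the missing four-index diamond. The overlapping diamond at coordinate $1$ is exactly what the hypothesis $r_1\ge 2$ makes available and what collapses when $r_1=1$, which is precisely the dichotomy the theorem records and which dictates the separate, count-based treatment of part (i).
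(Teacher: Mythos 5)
Your proposal is correct. Part (ii) coincides with the paper's argument essentially verbatim: the same application of Lemma \ref{lem3} with bounds $(1,1,0)$ and $(r_0,r_1-1,r_2-1)$, the same two-path diamond through coordinate $1$, and the same appeal to Lemma \ref{lem2} to get $a_1-a_0=a_2-a_1$. Part (i), however, is genuinely different. The paper stays inside the path machinery: when $r_1=1$, the successor of any $X=(x_0,x_1,x_2)$ under a step is forced ($x_1=0$ forces $(x_0-1,1,x_2)$, $x_1=1$ forces $(x_0,0,x_2+1)$), so the $(\mathbf{r},h)$-path from $V$ to $V'$ is unique and, by Lemma \ref{lem1}, passes through every element of $R(\mathbf{r},h)$; the count then drops out of Lemma \ref{lem0}. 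You instead bypass the paths entirely and enumerate: splitting on $x_1\in\{0,1\}$, writing the values as $x_2d_2$ and $d_1+x_2d_2$ over intervals $I_0,I_1$ depending only on $\mathbf{r}$ and $h$, noting the two families are internally injective and mutually disjoint since $0<d_1<d_2$ rules out $d_1=(x_2-x_2')d_2$, and then calibrating the $A$-independent count $|I_0|+|I_1|$ against the normalized case $A=\{0,1,2\}$ from Theorem \ref{thm1}. Your route is more elementary and self-contained for this degenerate case; the paper's has the advantage of reusing the same uniqueness-of-path idea that also drives the $r_1=r_2=1$ case of Theorem \ref{thm4}. Both arguments are sound.
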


\begin{proof} We first prove (i). Suppose that $r_1=1$.
By Lemma \ref{lem1}, there exists a $(\mathbf{r}, h)$-path from
$V$ to $V'$: \begin{equation}\label{xyz} V=V_0\rightarrow
V_1\rightarrow \cdots\rightarrow V_t=V'.\end{equation} Let
$X=(x_0, x_1, x_2)\rightarrow Y$ be a $(\mathbf{r}, h)$-path. If
$x_1=0$, then $Y=(x_0-1,1,x_2)$. If $x_1=1$, then
$Y=(x_0,0,x_2+1)$. That is, $Y$ is uniquely determined by $X$.
Hence,  the $(\mathbf{r}, h)$-path \eqref{xyz} is uniquely
determined by $V$ and $V'$. For any $W\in R(\mathbf{r},h)$, by
Lemma \ref{lem1}, there exists a $(\mathbf{r}, h)$-path from $V$
to $W$ and a $(\mathbf{r}, h)$-path $W$ to $V'$. Since \eqref{xyz}
is unique, we have $W\in \{V_0,V_1,\ldots,V_t\}$. Thus, by the
definition of $h^{(\mathbf{r})}A$, $\phi_A (V_i) <\phi_A (V_{i+1})
(0\le i\le t-1)$ and Lemma \ref{lem0}, we have
\begin{eqnarray*}
|h^{(\mathbf{r})}A|&=&|\{ \phi_A (X) : X\in R(\mathbf{r},h)\} |\\
&=&|\{ \phi_A (V_i) : i=0,\dots , t\} |\\
&=& t+1=S(V')-S(V)+1=L(\mathbf{r}, h).\end{eqnarray*}

Next we shall prove (ii). If $A$ is a $3$-term arithmetic
progression, without loss of generality, we may assume that
$A=\{0,1,2\}$. By the proof of Theorem \ref{thm1}, we have
$|h^{(\mathbf{r})}A|=L(\mathbf{r}, h)$.

Conversely, suppose that $r_1\ge 2$ and
$|h^{(\mathbf{r})}A|=L(\mathbf{r}, h)$.

Since $2\le h\le r_0+r_1+r_2 -2$, there exists $ (x_0, x_1,
x_2)\in R(\mathbf{r},h)$ such that
$$1\le x_0\le r_0,\quad 1\le x_1\le r_1-1,\quad 0\le x_2\le r_2-1.$$
Then
$$ (x_0, x_1, x_2)\rightarrow (x_0-1, x_1+1, x_2)\rightarrow (x_0-1, x_1,
x_2+1)$$ and
$$ (x_0, x_1, x_2)\rightarrow (x_0, x_1-1, x_2+1)\rightarrow (x_0-1, x_1,
x_2+1)$$ are two different $(\mathbf{r}, h)$-paths. By Lemma
\ref{lem2}, we have
$$\phi_A ((x_0-1, x_1+1, x_2)) = \phi_A ((x_0, x_1-1, x_2+1)).$$ This implies that
$a_1-a_0=a_2-a_1$. Therefore, $A$ is a $3$-term arithmetic
progression.\end{proof}

\begin{theorem}\label{thm4} Let $A=\{ a_0<a_1<a_2<a_3\} $ be a set of integers
and $\mathbf{r}= (r_0,r_1,r_2,r_3)$ be an ordered $4$-tuple of
positive integers. Suppose that $h$ is an integer with $2\le h\le
r_0+r_1+r_2+r_3 -2$. Then

(i) for $r_1=r_2=1$, we have $|h^{(\mathbf{r})}A|=L(\mathbf{r},
h)$ if and only if $a_1-a_0=a_3-a_2$;

(ii) for $r_1\ge 2$ or $r_2\ge 2$, we have
$|h^{(\mathbf{r})}A|=L(\mathbf{r}, h)$ if and only if $A$ is a
$4$-term arithmetic progression.
\end{theorem}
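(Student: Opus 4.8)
The plan is to reuse the $(\mathbf{r},h)$-path machinery of Lemmas \ref{lem0}--\ref{lem3} exactly as in the proofs of Theorems \ref{thm2} and \ref{thm3}, extracting from the hypothesis $2\le h\le r_0+r_1+r_2+r_3-2$ the precise linear relations among $a_0,a_1,a_2,a_3$ that each size condition on $\mathbf{r}$ forces. The common first step, giving the necessity in both (i) and (ii), is the ``square'' argument from the proof of Theorem \ref{thm2} with $j=0$: assuming $|h^{(\mathbf{r})}A|=L(\mathbf{r},h)$, Lemma \ref{lem3} yields an $X=(x_0,x_1,x_2,x_3)\in R(\mathbf{r},h)$ with $1\le x_0\le r_0$, $0\le x_1\le r_1-1$, $1\le x_2\le r_2$, $0\le x_3\le r_3-1$ (the sums of the lower and of the upper coordinate bounds are $2$ and $r_0+r_1+r_2+r_3-2$, so the range of $h$ is exactly what Lemma \ref{lem3} needs). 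Pushing mass $0\to 1$ and $2\to 3$ in the two possible orders produces two $(\mathbf{r},h)$-paths with common endpoints, and Lemma \ref{lem2} forces $a_1-a_0=a_3-a_2$. This relation holds for every positive $\mathbf{r}$; it is the entire content of the ``only if'' direction of (i).

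For the necessity in (ii) I would add a second, triangular diamond on three consecutive coordinates, as in the proof of Theorem \ref{thm3}(ii). If $r_1\ge 2$, Lemma \ref{lem3} provides $X$ with $1\le x_0\le r_0$, $1\le x_1\le r_1-1$, $0\le x_2\le r_2-1$, and pushing mass among coordinates $0,1,2$ in the two orders forces $a_1-a_0=a_2-a_1$; if instead $r_2\ge 2$, the same device on coordinates $1,2,3$ forces $a_2-a_1=a_3-a_2$ (in each case the coordinate bounds again sum to $2$ and $r_0+r_1+r_2+r_3-2$, so $X$ exists). Combining either relation with $a_1-a_0=a_3-a_2$ gives $a_1-a_0=a_2-a_1=a_3-a_2$, so $A$ is a $4$-term arithmetic progression. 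The converse in (ii) is immediate: if $A$ is an arithmetic progression we may take $A=\{0,1,2,3\}$ and quote the computation in the proof of Theorem \ref{thm1}.

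The main obstacle is the sufficiency in (i). Here $a_1-a_0=a_3-a_2$ is strictly weaker than $A$ being an arithmetic progression, so the reduction to $A=\{0,1,2,3\}$ is unavailable, and, unlike the $k=3$ case of Theorem \ref{thm3}(i), when $r_1=r_2=1$ the path from $V$ to $V'$ need not be unique (it can branch at tuples of the form $(x_0,0,1,x_3)$). My plan is to show instead that $\phi_A(X)$ depends on $X\in R(\mathbf{r},h)$ only through $S(X)$. Substituting $x_0=h-x_1-x_2-x_3$ and using $a_3-a_0=(a_1-a_0)+(a_2-a_0)$, which is exactly the hypothesis, one gets $\phi_A(X)=ha_0+(x_1+x_3)(a_1-a_0)+(x_2+x_3)(a_2-a_0)$. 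Since $r_1=r_2=1$ restricts $x_1,x_2$ to $\{0,1\}$ and $S(X)=x_1+2x_2+3x_3$, the residue $x_1+2x_2\bmod 3$ shows that two distinct tuples of $R(\mathbf{r},h)$ can share a value of $S$ only when one is of type $(x_0,0,0,x_3)$ and the other of type $(x_0',1,1,x_3-1)$; for such a pair $x_1+x_3$ and $x_2+x_3$ agree, so the displayed formula gives equal $\phi_A$. Hence $\phi_A$ is a strictly increasing function of $S$ (it rises along every step, on which $S$ rises by $1$), and as $S$ runs through the whole interval $[S(V),S(V')]$ of $L(\mathbf{r},h)$ integers (Lemma \ref{lem0} and the extremality of $V,V'$ from the proof of Theorem \ref{thm1}), we get $|h^{(\mathbf{r})}A|=L(\mathbf{r},h)$. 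I expect the only delicate point to be this $S$-collision bookkeeping; everything else is a direct transcription of the lemmas.
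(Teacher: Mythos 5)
Your proposal is correct, and it follows the paper's proof step for step in every part except the sufficiency of (i). The necessity arguments (the diamond on coordinates $0\to1$, $2\to3$ giving $a_1-a_0=a_3-a_2$ for every positive $\mathbf{r}$, plus the triangular diamonds on coordinates $0,1,2$ when $r_1\ge 2$ and on $1,2,3$ when $r_2\ge 2$) and the converse of (ii) coincide with the paper's, down to the coordinate bounds fed into Lemma \ref{lem3}. For the sufficiency of (i) the paper argues combinatorially: it compares two arbitrary $(\mathbf{r},h)$-paths from $V$ to $V'$ and shows, by a four-way case analysis on $(v_{i,1},v_{i,2})\in\{0,1\}^2$, that a branch can occur only at a tuple $(v_0,0,1,v_3)$, that the two possible successors $(v_0-1,1,1,v_3)$ and $(v_0,0,0,v_3+1)$ have equal $\phi_A$-value exactly because $a_1-a_0=a_3-a_2$, and that the paths reconverge one step later; every $W\in R(\mathbf{r},h)$ then contributes a value already on the distinguished path. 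You instead show algebraically that $\phi_A$ factors through the weighted sum $S$, via the identity $\phi_A(X)=ha_0+(x_1+x_3)(a_1-a_0)+(x_2+x_3)(a_2-a_0)$ and the mod-$3$ bookkeeping that pins any $S$-collision to the pair $\{(x_1,x_2),(y_1,y_2)\}=\{(0,0),(1,1)\}$ with $x_3,y_3$ differing by one --- which is precisely the paper's branch point seen from the other side. Your route is shorter and case-free, and it makes the attainment of the bound transparent (the image is parametrized by $S\in[S(V),S(V')]$); the paper's route never leaves the path formalism it has already built. The one point you should make explicit is that $S(X)\in[S(V),S(V')]$ for every $X\in R(\mathbf{r},h)$ because Lemma \ref{lem1} supplies a path from $V$ to $X$ and from $X$ to $V'$ --- the same fact the paper invokes inside Lemma \ref{lem2} --- after which your argument is complete.
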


\begin{proof}  Suppose that $|h^{(\mathbf{r})}A|=L(\mathbf{r}, h)$.

 Since $2\le h\le r_0+r_1+r_2+r_3 -2$, there exists $ (x_0, x_1,
x_2, x_3)\in R(\mathbf{r},h)$ such that
$$1\le x_0\le r_0,\quad  0\le x_1\le r_1-1,\quad 1\le x_2\le r_2, \quad 0\le x_3\le r_3-1.$$
Then
$$ (x_0, x_1, x_2,x_3)\rightarrow (x_0-1, x_1+1, x_2, x_3)\rightarrow (x_0-1, x_1+1,
x_2-1, x_3+1)$$ and
$$ (x_0, x_1, x_2, x_3)\rightarrow (x_0, x_1, x_2-1, x_3+1)\rightarrow (x_0-1, x_1+1,
x_2-1, x_3+1)$$ are two different $(\mathbf{r}, h)$-paths. By
Lemma \ref{lem2}, we have
$$\phi_A ((x_0-1, x_1+1, x_2, x_3)) = \phi_A ((x_0, x_1, x_2-1, x_3+1)).$$ This implies that
\begin{equation}\label{v} a_1-a_0=a_3-a_2.\end{equation}

We first prove (i).

It is enough to prove that if $r_1=r_2=1$ and $a_1-a_0=a_3-a_2$,
then $|h^{(\mathbf{r})}A|=L(\mathbf{r}, h)$.

 By Lemma \ref{lem1}, there exists a
$(\mathbf{r}, h)$-path from $V$ to $V'$
\begin{equation}\label{xys} V=V_0\rightarrow V_1\rightarrow
\cdots\rightarrow V_s=V'.\end{equation}  Suppose that
\begin{equation}\label{xyt} V=W_0\rightarrow W_1\rightarrow
\cdots\rightarrow W_t=V'\end{equation} is also a $(\mathbf{r},
h)$-path from $V$ to $V'$. By Lemma \ref{lem0}, we have $s=t$. Now
we prove that $\phi_A (V_i)=\phi_A (W_i) (0\le i\le s)$. In order
to prove this, we prove the following stronger result: for $0\le
i<s$, if $V_i=W_i$, then either $V_{i+1}=W_{i+1}$ or $V_{i+2}=
W_{i+2}$ and $\phi_A (V_{i+1})= \phi_A (W_{i+1})$.

Suppose that $0\le i<s$ and $V_i=W_i= (v_{i,0}, v_{i,1}, v_{i,2},
v_{i,3})$.

{\bf Case 1:} $v_{i,1}=v_{i,2}=0$. Then, by the definition of
step, we have
$$V_{i+1}=(v_{i,0}-1, v_{i,1}+1, v_{i,2},
v_{i,3}) =W_{i+1}.$$

{\bf Case 2:} $v_{i,1}=v_{i,2}=1$. Then, by the definition of
step and $r_1=r_2=1$, we have
$$V_{i+1}=(v_{i,0}, v_{i,1}, v_{i,2}-1,
v_{i,3}+1) =W_{i+1}.$$

{\bf Case 3:} $v_{i,1}=1, v_{i,2}=0$. Then, by the definition of
 step and $r_1=r_2=1$, we have
$$V_{i+1}=(v_{i,0}, v_{i,1}-1, v_{i,2}+1,
v_{i,3}+1) =W_{i+1}.$$

{\bf Case 4:} $v_{i,1}=0, v_{i,2}=1$. Then, by the definition of
 step and $r_1=r_2=1$, we have \begin{equation}\label{u}\{
V_{i+1},W_{i+1} \} \subseteq \{ (v_{i,0}-1, v_{i,1}+1, v_{i,2},
v_{i,3}), (v_{i,0}, v_{i,1}, v_{i,2}-1, v_{i,3}+1) \} .
\end{equation} Since
\begin{eqnarray*}&&\phi_A ((v_{i,0}-1, v_{i,1}+1, v_{i,2},
v_{i,3}))-\phi_A (V_i)\\
& =& a_1-a_0=a_3-a_2\\
& =& \phi_A ((v_{i,0}, v_{i,1}, v_{i,2}-1, v_{i,3}+1))-\phi_A
(V_i) ,\end{eqnarray*} we have $\phi_A (V_{i+1})= \phi_A
(W_{i+1})$.
 By \eqref{u}, the definition of
adjacency and $r_1=r_2=1$, we have
$$V_{i+2}=(v_{i,0}-1, v_{i,1}+1, v_{i,2}-1,
v_{i,3}+1)=W_{i+2}.$$

Thus, we have proved that for $0\le i<s$, if $V_i=W_i$, then
either $V_{i+1}=W_{i+1}$ or  $V_{i+2}= W_{i+2}$ and $\phi_A
(V_{i+1})= \phi_A (W_{i+1})$. It follows from $V_0=W_0$ and
$V_s=W_s$ that $\phi_A (V_i)=\phi_A (W_i) (0\le i\le s)$.

For any $W\in R(\mathbf{r},h)$, by Lemma \ref{lem1}, there exists
a $(\mathbf{r}, h)$-path from $V$ to $W$ and a $(\mathbf{r},
h)$-path $W$ to $V'$. By the above arguments, we have
$$\phi_A (W)\in \{ \phi_A (V_i) : 0\le i\le s\} .$$
Hence
$$h^{(\mathbf{r})}A = \{ \phi_A (X) : X\in R(\mathbf{r},h) \}
 =\{ \phi_A (V_i) : 0\le i\le s\} .$$
Therefore, by Lemma \ref{lem0},
$$|h^{(\mathbf{r})}A|=s+1=S(V')-S(V)+1=L(\mathbf{r}, h).$$

Now we prove (ii).

If $A$ is a $4$-term arithmetic progression, without loss of
generality, we may assume that $A=\{0,1,2,3\}$. By the proof of
Theorem \ref{thm1}, we have $|h^{(\mathbf{r})}A|=L(\mathbf{r},h)$.

Conversely, we suppose that $|h^{(\mathbf{r})}A|=L(\mathbf{r},h)$
and $r_1\ge 2$ or $r_2\ge 2$. By \eqref{v}, it is enough to prove
that $a_2-a_1=a_1-a_0$ or $a_2-a_1=a_3-a_2$.

{\bf Case 1:}  $r_1\ge 2$. Since $2\le h\le r_0+r_1+r_2+r_3 -2$,
there exists $Y=(y_0, y_1, y_2, y_3)\in R(\mathbf{r},h)$ such that
$$1\le y_0\le r_0,\quad  1\le y_1\le r_1-1,\quad 0\le y_2\le r_2-1, \quad 0\le y_3\le r_3.$$
Then
$$ (y_0, y_1, y_2,y_3)\rightarrow (y_0-1, y_1+1, y_2, y_3)\rightarrow (y_0-1, y_1,
y_2+1, y_3)$$ and
$$ (y_0, y_1, y_2, y_3)\rightarrow (y_0, y_1-1, y_2+1, y_3)\rightarrow (y_0-1, y_1,
y_2+1, y_3)$$ are two different $(\mathbf{r}, h)$-paths. By Lemma
\ref{lem2}, we have
$$\phi_A ((y_0-1, y_1+1, y_2, y_3)) = \phi_A ((y_0, y_1-1, y_2+1, y_3)).$$ This implies that
$a_1-a_0=a_2-a_1$.

{\bf Case 2:}  $r_2\ge 2$. Since $2\le h\le r_0+r_1+r_2+r_3 -2$,
there exists $Z=(z_0, z_1, z_2, z_3)\in R(\mathbf{r},h)$ such that
$$0\le z_0\le r_0,\quad  1\le z_1\le r_1,\quad 1\le z_2\le r_2-1, \quad 0\le z_3\le r_3-1.$$
Then
$$ (z_0, z_1, z_2,z_3)\rightarrow (z_0, z_1-1, z_2+1, z_3)\rightarrow (z_0, z_1-1,
z_2, z_3+1)$$ and
$$ (z_0, z_1, z_2, z_3)\rightarrow (z_0, z_1, z_2-1, z_3+1)\rightarrow (z_0, z_1-1,
z_2, z_3+1)$$ are two different $(\mathbf{r}, h)$-paths. By Lemma
\ref{lem2}, we have
$$\phi_A ((z_0, z_1-1, z_2+1, z_3)) = \phi_A ((z_0, z_1, z_2-1, z_3+1)).$$ This implies that
$a_2-a_1=a_3-a_2$.

Therefore, $A$ is a $4$-term arithmetic progression.
\end{proof}


\begin{thebibliography}{30}

\bibitem{Kapoor} V. Kapoor, {\it Sets whose sumset avoids a thin sequence,}
J. Number Theory 130 (2010) 534-538.

\bibitem{lev1} V. F. Lev, {\it Representing powers of $2$ by a sum of four integers,}
Combinatorica 16 (1996) 1-4.

\bibitem{lev2} V. F. Lev, {\it Structure theorem for multiple addition and the Frobenius problem,}
J. Number Theory 58 (1996) 79-88.

\bibitem{Pandey} R. K. Mistri, R. K. Pandey, {\it A generalization of sumsets of set of integers,}
J. Number Theory 143 (2014) 334-356.

\bibitem{Nathanson72} M. B. Nathanson, {\it Sums of finite sets of integers,} Amer. Math. Monthly
79 (1972) 1010-1012.

\bibitem{Nathanson95} M. B. Nathanson, {\it Inverse theorems for subset sums,} Trans. Amer. Math.
Soc. 347 (1995) 1409-1418.

\bibitem{Nathanson96} M. B. Nathanson, {\it Additive Number Theory: Inverse Problems and the
Geometry of Sumsets,} Springer, 1996.

\bibitem{Nathanson89} M. B. Nathanson, A. S$\acute{a}$rk\"{o}zy, {\it Sumsets containing 
long arithmetic progressions and powers of $2$,} Acta Arith. 54 (1989) 147-154.

\bibitem{Pan} H. Pan, {\it Note on integer powers in sumsets,}
J. Number Theory 117 (2006) 216-221.

%\bibitem{Sze05} E. Szemer$\acute{e}$di and V. H. Vu, {\it Long arithmetic progressions in sumsets:
%thresholds and bounds,} J. Amer. Math. Soc. 19 (2005) 119-169.
%
%\bibitem{Sze06} E. Szemer$\acute{e}$di and V. H. Vu, {\it Finite and infinite arithmetic progressions in
%sumsets,} Ann. of Math. 163 (2006) 1-35.

\bibitem{chen} J.-D. Wu, F.-J. Chen and Y.-G. Chen, {\it On the structure of the sumsets,}
Discrete Math. 311 (2011) 408-412.

\bibitem{yang} Q.-H. Yang, Y.-G. Chen, {\it Sumsets and difference sets containing a common term of a sequence,}
Bull. Aust. Math. Soc. 85 (2012) 79-83.
\end{thebibliography}
\end{document}